\documentclass[a4paper,fleqn]{cas-sc}

\usepackage[authoryear,longnamesfirst]{natbib}
\usepackage{amsmath,amssymb,mathtools}
\usepackage{booktabs}
\usepackage{tikz}
\usepackage{xspace}

\newtheorem{theorem}{Theorem}[section]
\newtheorem{lemma}[theorem]{Lemma}
\newtheorem{corollary}[theorem]{Corollary}
\newtheorem{proposition}[theorem]{Proposition}
\newdefinition{definition}[theorem]{Definition}
\newproof{pf}{Proof}

\begin{document}
\let\WriteBookmarks\relax
\def\floatpagepagefraction{1}
\def\textpagefraction{.001}

\shorttitle{Pairwise compatibility of grid graphs}
\shortauthors{Hakim and Bayzid}

\title[mode=title]{Pairwise Compatibility Representations of Multidimensional Grid Graphs}

\author[1]{Sheikh Azizul Hakim}
\ead{hakim@cse.buet.ac.bd}
\credit{Conceptualization, Methodology, Programming based Analysis and  Investigation, Writing -- original draft, Visualization}

\author[1]{Md. Shamsuzzoha Bayzid}
\cormark[1]
\ead{shams_bayzid@cse.buet.ac.bd}
\credit{Conceptualization, Methodology, Supervision, Validation, Writing -- review and editing}

\affiliation[1]{organization={Department of Computer Science and Engineering, Bangladesh University of Engineering and Technology (BUET)},
            city={Dhaka},
            postcode={1000},
            country={Bangladesh}}

\cortext[1]{Corresponding author}

\begin{abstract}
Pairwise compatibility graphs (PCGs) represent graph adjacency by an interval of leaf-to-leaf distances in a weighted tree. We study  grid graphs under the PCG model and two natural extensions: multi-interval PCGs and OR-PCGs. First, we prove that every $d$-dimensional grid graph is a $(d-1)$-interval-PCG. The construction decomposes the grid into hyperplanes of constant coordinate sum and uses a large-base encoding so that distances between consecutive hyperplanes identify the coordinate direction of an edge. A pair of nearby code values is then merged into one interval, reducing the number of intervals from $d$ to $d-1$. Second, we prove that every $d$-dimensional grid is a $\lceil d/2\rceil$-OR-PCG by grouping coordinate directions into pairs; each paired-direction graph is a disjoint union of two-dimensional grid graphs and is therefore a PCG. Finally, an exact tree-metric satisfiability computation shows that $P_3\square P_3\square P_3$ is not a PCG. Consequently, the minimum number of intervals sufficient for all three-dimensional grid graphs is exactly two, resolving a previously posed open problem. The same obstruction shows that the OR-PCG bound is tight in dimension three and implies that every grid with at least three factors of order at least three is not a PCG.
\end{abstract}


\begin{keywords}
Pairwise compatibility graph \sep Grid graph \sep Tree metric \sep Multi-interval PCG \sep OR-PCG \sep SMT
\end{keywords}

\maketitle

\section{Introduction}\label{sec:introduction}

Pairwise compatibility graphs (PCGs) encode graph adjacency through distances among the leaves of a weighted tree. A graph $G=(V,E)$ is a PCG if there exist a positively edge-weighted tree whose leaves are in bijection with $V$ and an interval $[d_{\min},d_{\max}]$ such that two vertices are exactly adjacent when the distance between their corresponding leaves belongs to that interval. The model arose from questions involving phylogenetic trees and has since developed into an independent graph-representation framework; see, for example, \citep{Yanhaona2009PCG,CalamoneriSinaimeri2016Survey,Rahman01092020, CalamoneriMontiSinaimeri2025Variants}.

The compact definition of PCGs hides a difficult recognition problem. A witness consists simultaneously of an unknown tree topology, positive edge weights, and two interval endpoints. Consequently, even highly structured graph families can be nontrivial to classify. This has motivated the study of both special PCG subclasses and natural superclasses. Two extensions are particularly relevant here. A $k$-interval-PCG retains one weighted tree but permits a union of $k$ disjoint admissible distance intervals \citep{AhmedRahman2017MultiInterval}. A $k$-OR-PCG is the union of $k$ PCGs on the same vertex set \citep{CalamoneriMontiSinaimeri2021Generalizations}. Every $k$-interval-PCG is therefore a $k$-OR-PCG, although the converse need not hold.

Grid graphs form a natural test family for these models. They are Cartesian products of paths, have a highly local adjacency rule, and occur throughout graph theory and discrete mathematics. Two-dimensional grid graphs are PCGs \citep{HakimPapanRahman2022}, while a proper subclass of three-dimensional grid graphs was previously shown to admit two-interval representations \citep{PapanPrantoRahman2023}. It remains an open problem to find the minimum integer $k$ such that every three-dimensional grid is a $k$-interval-PCG \citep{CalamoneriMontiSinaimeri2025Variants}.

We resolve this question and establish general bounds in arbitrary dimension. Our contributions are as follows.
\begin{enumerate}
    \item Every $d$-dimensional  grid is a $(d-1)$-interval-PCG. The proof uses hyperplanes of constant coordinate sum and a large-base coordinate encoding on a single weighted tree.
    \item Every $d$-dimensional  grid is a $\lceil d/2\rceil$-OR-PCG. The proof partitions the coordinate directions into pairs and represents each paired-direction subgraph separately.
    \item The three-dimensional grid $P_3\square P_3\square P_3$ is not a PCG. This is established through an exact satisfiability formulation based on the four-point characterization of finite tree metrics.
\end{enumerate}

The third result supplies matching lower bounds in dimension three. Hence the smallest $k$ for which every three-dimensional grid is a $k$-interval-PCG is exactly $2$, and the smallest $k$ for which every three-dimensional grid is a $k$-OR-PCG is also exactly $2$. Moreover, since PCGs are hereditary under induced subgraphs ~\cite{Yanhaona2009PCG}, the $3\times3\times3$ obstruction implies that every grid with at least three path factors of order at least three is not a PCG.

The paper is organized as follows. Section~\ref{sec:preliminaries} introduces the required definitions. Section~\ref{sec:interval} gives the $(d-1)$-interval construction. Section~\ref{sec:or} proves the OR-PCG bound. Section~\ref{sec:tightness} establishes tightness in dimension three and derives the induced-subgraph consequences. Section~\ref{sec:conclusion} concludes with open questions.

\section{Preliminaries}\label{sec:preliminaries}

All graphs considered in this paper are finite, simple, and undirected. A weighted tree always has strictly positive real edge weights. For a weighted tree $T$ and vertices $x,y\in V(T)$, let $d_T(x,y)$ denote the total weight of the unique $x$--$y$ path in $T$. We write $L(T)$ for the set of leaves of $T$.

\begin{definition}[Grid graph]
For positive integers $n_1,\ldots,n_d$, the $d$-dimensional  grid graph is
$G=P_{n_1}\square P_{n_2}\square\cdots\square P_{n_d}$.
Its vertex set is identified with
$V(G)={(x_1,\ldots,x_d):0\le x_i\le n_i-1 \text{ for every } i\in{1,\ldots,d}}$.
Two vertices are adjacent if and only if they differ by $1$ in exactly one coordinate and agree in every other coordinate.
\end{definition}

\begin{definition}[Pairwise compatibility graph]
A graph $G=(V,E)$ is a \emph{pairwise compatibility graph}, abbreviated PCG, if there exist a positively weighted tree $T$, a bijection $\zeta:V\rightarrow L(T)$, and real numbers $0\le d_{\min}\le d_{\max}$ such that, for every pair of distinct vertices $u,v\in V$,
$uv\in E$ if and only if
$d_{\min}\le d_T(\zeta(u),\zeta(v))\le d_{\max}$.
\end{definition}

When the bijection $\zeta$ is clear from context, we identify each vertex $v\in V$ with its corresponding leaf $\zeta(v)\in L(T)$ and write $d_T(u,v)$ instead of $d_T(\zeta(u),\zeta(v))$.

\begin{definition}[$k$-interval-PCG]
Let $k\ge1$. A graph $G=(V,E)$ is a \emph{$k$-interval-PCG} if there exist a positively weighted tree $T$, a bijection $\zeta:V\rightarrow L(T)$, and pairwise disjoint closed intervals $I_1,\ldots,I_k$ such that, for every pair of distinct vertices $u,v\in V$,
$uv\in E$ if and only if
$d_T(\zeta(u),\zeta(v))\in\bigcup_{j=1}^{k}I_j$.
The case $k=1$ is exactly the class of PCGs.
\end{definition}

\begin{definition}[$k$-OR-PCG]
Let $k\ge1$. A graph $G=(V,E)$ is a \emph{$k$-OR-PCG} if there exist PCGs $G_i=(V,E_i)$, for $1\le i\le k$, defined on the same vertex set $V$, such that
$E=E_1\cup E_2\cup\cdots\cup E_k$.
\end{definition}

The following standard closure property will be used in Section~\ref{sec:or}.

\begin{lemma}\label{lem:disjoint-copies}~\citep{XiaoNagamochi2020Reductions}
If $H$ is a PCG, then every finite disjoint union of copies of $H$ is also a PCG.
\end{lemma}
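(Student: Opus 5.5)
Given a PCG $H$ with witness tree $T$ and interval $[d_{\min},d_{\max}]$, I will build one tree for $k$ disjoint copies of $H$. The idea is to keep each copy's internal distances exactly as in $T$. Copies are then joined through long connecting edges, so that every cross-copy distance exceeds $d_{\max}$.

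\textbf{Normalization.} First, if $H$ has no edges, the disjoint union is edgeless. Edgeless graphs are PCGs: take a star with unit weights and the interval $[3,3]$, which no distance attains. So assume $H$ has at least one edge, which forces $d_{\max}\ge d_{\min}$ and $d_{\max}>0$.

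\textbf{Construction.} Root $T$ at an arbitrary internal vertex $r$; if $T$ has no internal vertex, subdivide an edge to create one without changing leaf distances. Take $k$ copies $T_1,\ldots,T_k$ with roots $r_1,\ldots,r_k$. Add a new vertex $c$, and join $c$ to each $r_i$ by an edge of weight $M$, where $M > d_{\max}/2$. The result $T^*$ is a tree, its edge weights are positive, and its leaves are exactly the leaves of the copies. The vertex $c$ is not a leaf when $k\ge2$; when $k=1$ there is nothing to prove.

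\textbf{Verification.} For two leaves in the same copy, the path between them stays inside $T_i$. Their distance therefore equals the original distance, so adjacency is decided exactly as in $H$. For leaves $u\in T_i$ and $v\in T_j$ with $i\ne j$, the distance is
$$d_{T^*}(u,v)=d_T(u',r)+2M+d_T(r,v')\ge 2M>d_{\max},$$
where $u'$ and $v'$ are the original leaves corresponding to $u$ and $v$. Hence such a pair is non-adjacent, as required in a disjoint union. Using the same interval $[d_{\min},d_{\max}]$ for $T^*$ completes the proof.

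The main obstacle is minor: the only delicate points are the degenerate cases ($H$ edgeless, $T$ a single edge, $k=1$). Large cross-copy distances are harmless because the admissible interval is bounded above, so no balancing against $d_{\min}$ is needed.
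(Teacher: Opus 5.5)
Your proof is correct. The paper gives no argument of its own for this lemma and simply cites \citet{XiaoNagamochi2020Reductions}, so your construction is a self-contained replacement for that citation.

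The construction joins $k$ copies of the witness tree at internal roots to a new hub $c$, using edges of weight $M>d_{\max}/2$. This preserves every within-copy distance and makes every cross-copy distance at least $2M>d_{\max}$. Rooting at an internal vertex (after subdividing if $T$ is a single edge) is the right precaution, because it keeps the leaf set unchanged.

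Your argument is short because all copies share one witness $(T,[d_{\min},d_{\max}])$, so you never rescale or shift intervals. A closure statement for disjoint unions of \emph{different} PCGs would first need to normalize the witnesses to a common interval, e.g.\ by scaling and lengthening pendant edges. The copies version is exactly what the paper uses in Theorem~\ref{thm:or-bound}, where all components of $G_j$ are isomorphic two-dimensional grids.

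One small remark: your separate edgeless case is really needed only for $H\cong K_1$, whose witness tree has no edge to subdivide. For any other edgeless $H$ the main construction already works, since $M>d_{\max}/2\ge 0$. So the reason you give for assuming an edge (forcing $d_{\max}>0$) is not what the case distinction actually protects against.
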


\section{A $(d-1)$-interval representation}\label{sec:interval}

We first give a uniform single-tree construction for grid graphs of arbitrary dimension. Two-dimensional grid graphs are PCGs \citep{HakimPapanRahman2022}, so the substantive case is $d\ge3$.

\begin{theorem}\label{thm:dminus1}
For every integer $d\ge3$ and all positive integers $n_1,\ldots,n_d$, the grid
\(
P_{n_1}\square P_{n_2}\square\cdots\square P_{n_d}
\)
is a $(d-1)$-interval-PCG.
\end{theorem}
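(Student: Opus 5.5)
The plan is to realize the grid on a caterpillar, so that leaf-to-leaf distance is essentially the absolute difference of an integer code attached to each vertex. The code will be a large-base encoding of the coordinates, chosen so that the $d$ coordinate directions produce $d$ distinct distances, two of which are adjacent integers. Those two then fit into a single interval, giving $d-1$ intervals in total. This uses a single-tree encoding but not the constant-sum hyperplane decomposition mentioned in the abstract, which I do not need for this route.

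\textbf{Step 1 (encoding).} Let $N=\max_i n_i$ and fix an integer $B>4dN$. Define step sizes $a_1=B$, $a_2=B+1$, and $a_j=B^{j}$ for $3\le j\le d$. For a vertex $v=(x_1,\ldots,x_d)$ set $P(v)=\sum_{j} a_j x_j$.
\begin{itemize}
\item First I will check that $P$ is injective.
\item More strongly, for distinct $u,v$ write $c=u-v$, so $|c_j|\le n_j-1$. I will show that $|P(u)-P(v)|\in[B,B+1]$ holds if and only if $c=\pm e_1$ or $c=\pm e_2$.
\item I will also show that $|P(u)-P(v)|=B^{j}$ holds, for $j\ge3$, if and only if $c=\pm e_j$.
\end{itemize}
These claims are digit arguments in base $B$. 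Write $P(u)-P(v)=(c_1+c_2)B+c_2+\sum_{j\ge3}c_jB^{j}$, where each coefficient has absolute value well below $B/2$.
\begin{itemize}
\item Uniqueness of balanced base-$B$ representations forces every $c_j$ with $j\ge3$ to vanish whenever the value is below $B^3/2$ in absolute value.
\item It then forces $c_1+c_2=\pm1$ and $c_2\in\{0,\pm1\}$ with matching sign, i.e.\ $c\in\{\pm e_1,\pm e_2\}$, for the window $[B,B+1]$.
\item In the same way, $|P(u)-P(v)|=B^{j}$ forces $c=\pm e_j$.
\end{itemize}
A nonzero $c$ supported on several coordinates cannot produce any of these values. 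This is the only delicate point, and the estimate $B>4dN$ is chosen to make all carries impossible.

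\textbf{Step 2 (the tree).} Sort the vertices by $P(v)$ and build a path (the spine) whose internal vertices $s_v$ appear in this order, with the edge between consecutive $s_u,s_v$ weighted $|P(u)-P(v)|>0$. To each $s_v$ attach a pendant leaf $\zeta(v)$ by an edge of weight $1$.
\begin{itemize}
\item If the extreme spine vertices would otherwise be leaves, I will attach the leaf to the neighbouring spine vertex's position by subdividing. Alternatively I will simply note that the endpoint $s_v$ has degree $2$ after the pendant is attached, so the leaves of $T$ are exactly the $\zeta(v)$.
\item Then $d_T(u,v)=|P(u)-P(v)|+2$ for all distinct $u,v$.
\end{itemize}

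\textbf{Step 3 (intervals).} Take $I_1=[B+2,B+3]$ and $I_j=[B^{j}+2,B^{j}+2]$ for $3\le j\le d$. These are $d-1$ pairwise disjoint closed intervals, since $B^3>B+3$ and the powers are distinct. By Step 1, $d_T(u,v)$ lies in their union exactly when $u-v=\pm e_j$ for some $j$, which is exactly adjacency in $P_{n_1}\square\cdots\square P_{n_d}$.

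The main obstacle is the digit argument in Step 1. It must exclude every non-unit difference vector, including ones with large coordinates that could combine to produce, for example, $B+1$ via $c_1=-1$ and $c_2=2$. Writing $P(u)-P(v)$ in the balanced base-$B$ form above, with all digits of absolute value below $B/2$, is what I would try first. This handles the example, because $(c_1+c_2)B+c_2=B+2\ne B+1$. Once that lemma is settled, Steps 2 and 3 are routine.
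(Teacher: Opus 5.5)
Your proof is correct, and it is a leaner route to the theorem than the paper's. Both arguments rest on the same two ideas. First, a large-base linear code in which two step sizes differ by exactly $1$ (your $B,B+1$; the paper's $M^2,M^2-1$), so that two directions share one interval. Second, a digit-uniqueness lemma showing that only signed unit difference vectors hit the target code values.

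The difference is the layering. The paper places each constant-sum hyperplane $\mathcal H_s$ on its own segment of length $\Lambda$ and joins consecutive segments by bridges of length $H=M^{d-1}+2$. It disposes of same-layer pairs and pairs two or more layers apart by magnitude bounds. It applies the digit lemma only to consecutive-layer pairs, where each edge is canonically oriented and only $+W_t$ must be matched.

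You place leaf $v$ directly at position $P(v)$ on one caterpillar and run the digit comparison for all pairs at once, handling the sign by symmetry. The parameters $\Lambda$, $H$, $K$ and the three-way case split disappear. In fact the paper's tree is also a caterpillar, with leaf positions $\sum_i(\Lambda+H+W_i)x_i$. Its digit argument uses only $|\Delta_i|\le R$, never $\sum_i\Delta_i=1$, so your proof shows the hyperplane decomposition is dispensable.

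The paper's layering buys an intuitive picture (an edge crosses exactly one bridge, and the excess length names its direction) and a sign-free comparison. Your route buys three things:
\begin{itemize}
\item a single lemma covering every pair;
\item a tree whose leaf set is exactly the vertex set (the paper's backbone as written leaves the endpoint $\beta_L$ as a spare leaf that must be trimmed);
\item when specialized to $d=2$, a one-interval representation of two-dimensional grids for free.
\end{itemize}
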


\begin{pf}
Let
\(
G=P_{n_1}\square\cdots\square P_{n_d} 
\) and 
\(
V(G)=\{(x_1,\ldots,x_d):0\le x_i\le n_i-1\}.
\)
Set
\(
R=\max_{1\le i\le d}(n_i-1)
\)
and choose an integer $M$ such that
\(
M>dR+2.
\)
For $1\le i\le d$, define
\[
W_i=
\begin{cases}
M^{d-i}, & 1\le i\le d-2,\\
M^2-1, & i=d-1,\\
M, & i=d.
\end{cases}
\]
For a grid vertex $x=(x_1,\ldots,x_d)$, let
\(
\phi(x)=\sum_{i=1}^{d}W_i x_i.
\)
Since the origin belongs to $V(G)$, the minimum value of $\phi$ is $0$. Define
\(
\Lambda=1+\max_{x\in V(G)}\phi(x),
\)
so that $0\le\phi(x)\le\Lambda-1$ for every $x$.

For each integer $s$, let
\(
\mathcal H_s=\left\{x\in V(G):\sum_{i=1}^{d}x_i=s\right\}.
\)
If $L=\sum_{i=1}^{d}(n_i-1)$, then the nonempty hyperplanes are $\mathcal H_0,\ldots,\mathcal H_L$. Every grid edge joins two consecutive hyperplanes.

We now construct a weighted tree. For every $s\in\{0,\ldots,L\}$, create a segment $\alpha_s\beta_s$ of length $\Lambda$. For $s<L$, join $\beta_s$ to $\alpha_{s+1}$ by an edge of length
\(
H=M^{d-1}+2.
\)
Thus the backbone is
\(
\alpha_0-\beta_0-\alpha_1-\beta_1-\cdots-\alpha_L-\beta_L.
\)
For every vertex $v\in\mathcal H_s$, place a point $q_v$ on $\alpha_s\beta_s$ at distance $\phi(v)$ from $\alpha_s$, and attach a leaf $\ell_v$ to $q_v$ by an edge of length $1$.

\begin{figure}[pos=htbp]
\centering
\begin{tikzpicture}[scale=.88,every node/.style={font=\small}]
  \coordinate (a0) at (0,0);
  \coordinate (b0) at (4,0);
  \coordinate (a1) at (6.4,0);
  \coordinate (b1) at (10.4,0);
  \draw[thick] (a0)--(b0)--(a1)--(b1);
  \node[below] at (a0) {$\alpha_s$};
  \node[below] at (b0) {$\beta_s$};
  \node[below] at (a1) {$\alpha_{s+1}$};
  \node[below] at (b1) {$\beta_{s+1}$};
  \coordinate (qu) at (1.35,0);
  \coordinate (qv) at (8.2,0);
  \fill (qu) circle (1.5pt);
  \fill (qv) circle (1.5pt);
  \coordinate (lu) at (1.35,1.55);
  \coordinate (lv) at (8.2,1.55);
  \draw[thick] (qu)--(lu);
  \draw[thick] (qv)--(lv);
  \fill (lu) circle (1.7pt);
  \fill (lv) circle (1.7pt);
  \node[above] at (lu) {$\ell_u$};
  \node[above] at (lv) {$\ell_v$};
  \node[below=2pt] at (qu) {$q_u$};
  \node[below=2pt] at (qv) {$q_v$};
  \draw[<->] (0,-.78)--(4,-.78);
  \node at (2,-1.12) {length $\Lambda$};
  \draw[<->] (4,-.78)--(6.4,-.78);
  \node at (5.2,-1.12) {length $H$};
  \draw[<->] (6.4,-.78)--(10.4,-.78);
  \node at (8.4,-1.12) {length $\Lambda$};
\end{tikzpicture}
\caption{The tree construction for two consecutive hyperplanes.}
\label{fig:tree-construction}
\end{figure}

Let
\(
K=H+\Lambda+2.
\)
We define $d-1$ pairwise disjoint intervals. For $1\le j\le d-3$, let
\(
I_j=\left[K+M^{d-j}-\frac{1}{10},\ K+M^{d-j}+\frac{1}{10}\right].
\)
The next interval contains the two consecutive code values $M^2-1$ and $M^2$:
\(
I_{d-2}=\left[K+M^2-\frac{11}{10},\ K+M^2+\frac{1}{10}\right].
\)
Finally, let
\(
I_{d-1}=\left[K+M-\frac{1}{10},\ K+M+\frac{1}{10}\right]
\)
and write $\mathcal I=\bigcup_{j=1}^{d-1}I_j$. Since $M\ge4$, these intervals are pairwise disjoint.

We determine the leaf distances. Let $u\in\mathcal H_s$ and $v\in\mathcal H_{s+k}$, where $k\ge0$.
If $k=0$, then
\(
d_T(\ell_u,\ell_v)=2+|\phi(v)-\phi(u)|\le\Lambda+1.
\)
If $k=1$, the path crosses exactly one bridge, and
\[
d_T(\ell_u,\ell_v)
 =1+(\Lambda-\phi(u))+H+\phi(v)+1
 =K+\phi(v)-\phi(u).
\]
If $k\ge2$, at least two bridges are crossed, and the extreme possible positions of $q_u$ and $q_v$ give
\(
d_T(\ell_u,\ell_v)\ge2H+\Lambda+3.
\)

Suppose first that $uv\in E(G)$ with $u\in\mathcal H_s$ and $v\in\mathcal H_{s+1}$. If the $i$th coordinate increases, then
\(
\phi(v)-\phi(u)=W_i,
\)
and therefore
\(
d_T(\ell_u,\ell_v)=K+W_i\in\mathcal I.
\)
Indeed, the values $W_1,\ldots,W_{d-3}$ are the centers of $I_1,\ldots,I_{d-3}$, the two values $W_{d-2}=M^2$ and $W_{d-1}=M^2-1$ both lie in $I_{d-2}$, and $W_d=M$ is the center of $I_{d-1}$.

It remains to exclude nonedges. Pairs in the same hyperplane have distance at most $\Lambda+1$, which is below the smallest point of $\mathcal I$. Pairs whose hyperplane indices differ by at least two have distance at least $2H+\Lambda+3$. On the other hand, the largest point of $\mathcal I$ is
\(
K+M^{d-1}+\frac{1}{10}
 =2M^{d-1}+\Lambda+4+\frac{1}{10},
\)
whereas
\(
2H+\Lambda+3=2M^{d-1}+\Lambda+7.
\)
Hence these distances lie above $\mathcal I$.

Consider finally a nonedge $uv$ with $u\in\mathcal H_s$ and $v\in\mathcal H_{s+1}$. Write
\(
\Delta_i=v_i-u_i.
\)
Then
\(
\sum_{i=1}^{d}\Delta_i=1,
\qquad
|\Delta_i|\le R,
\)
and $u,v$ are adjacent exactly when $(\Delta_1,\ldots,\Delta_d)$ is a standard unit vector. Suppose, toward a contradiction, that $d_T(\ell_u,\ell_v)\in\mathcal I$. The distance is an integer, and the only integers contained in $\mathcal I$ are $K+W_1,\ldots,K+W_d$. Thus
\(
\sum_{i=1}^{d}W_i\Delta_i=W_t
\)
for some $t\in\{1,\ldots,d\}$.

We use the following elementary uniqueness fact.

\begin{lemma}\label{lem:base-uniqueness}
Let $a_0,\ldots,a_q$ be integers with $|a_i|<M$. If
\(
a_0+a_1M+\cdots+a_qM^q=0,
\)
then $a_0=\cdots=a_q=0$.
\end{lemma}

\begin{pf}
Reducing the equality modulo $M$ gives $a_0\equiv0\pmod M$. Since $|a_0|<M$, we have $a_0=0$. Divide by $M$ and repeat.
\end{pf}

Expanding the code equation gives
\(
-\Delta_{d-1}+M\Delta_d+M^2(\Delta_{d-2}+\Delta_{d-1})
   +\sum_{i=1}^{d-3}M^{d-i}\Delta_i=W_t.
\)
Write $W_t$ in the same base-$M$ form: it is $-1+M^2$ when $t=d-1$, $M$ when $t=d$, $M^2$ when $t=d-2$, and $M^{d-t}$ otherwise. After moving the right-hand side to the left, every coefficient has absolute value at most $2R+1$. Because $d\ge3$ and $M>dR+2$, we have
\(
2R+1<M.
\)
Lemma~\ref{lem:base-uniqueness} therefore forces equality of the coefficients of every power of $M$. The constant coefficient gives
\(
\Delta_{d-1}=\begin{cases}1,&t=d-1,\\0,&t\ne d-1,\end{cases}
\)
the coefficient of $M$ gives
\(
\Delta_d=\begin{cases}1,&t=d,\\0,&t\ne d,\end{cases}
\)
and the coefficient of $M^2$, together with the already determined value of $\Delta_{d-1}$, gives
\(
\Delta_{d-2}=\begin{cases}1,&t=d-2,\\0,&t\ne d-2.\end{cases}
\)
For every $j\le d-3$, comparison of the coefficient of $M^{d-j}$ yields
\(
\Delta_j=\begin{cases}1,&t=j,\\0,&t\ne j.\end{cases}
\)
Thus $(\Delta_1,\ldots,\Delta_d)$ is the standard unit vector in direction $t$, contradicting the assumption that $uv$ is a nonedge.

We have shown that
\(
uv\in E(G)
\quad\Longleftrightarrow\quad
 d_T(\ell_u,\ell_v)\in\mathcal I.
\)
Since $\mathcal I$ is the union of $d-1$ pairwise disjoint intervals, $G$ is a $(d-1)$-interval-PCG.
\end{pf}

\begin{corollary}\label{cor:3d-two-interval}
Every three-dimensional grid is a $2$-interval-PCG.
\end{corollary}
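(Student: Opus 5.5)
This corollary is the case $d=3$ of Theorem~\ref{thm:dminus1}, since $d-1=2$. The plan is to apply the theorem with $d=3$ and arbitrary positive integers $n_1,n_2,n_3$. No further argument is needed beyond checking that the construction really specializes correctly at the boundary value $d=3$.

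First I would check the weights. For $d=3$ the range $1\le i\le d-2$ is just $i=1$, so the weights become $W_1=M^2$, $W_2=M^2-1$, $W_3=M$, with $H=M^2+2$.

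Next I would check the intervals. The family $I_1,\ldots,I_{d-3}$ is empty when $d=3$, so only two intervals remain:
\begin{itemize}
\item $I_{d-2}=I_1$, which contains both $K+M^2-1$ and $K+M^2$;
\item $I_{d-1}=I_2$, centered at $K+M$.
\end{itemize}
These two intervals are disjoint because $M\ge4$.

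I would then confirm the two places in the proof where $d=3$ is the edge case.
\begin{itemize}
\item The coefficient comparison uses $2R+1<M$. This holds because $M>3R+2$.
\item The base-$M$ expansion has no terms $M^{d-i}$ with $i\le d-3$. The constant, $M$, and $M^2$ coefficients therefore already determine all of $\Delta_1,\Delta_2,\Delta_3$.
\end{itemize}

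The upper-separation bound also needs a check. The largest point of the union is $K+M^2+\tfrac1{10}$, while any pair of leaves two or more hyperplanes apart is at distance at least $2M^2+\Lambda+7$. These agree with the general formulas at $d=3$, so such pairs lie above both intervals.

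Hence $P_{n_1}\square P_{n_2}\square P_{n_3}$ is a $2$-interval-PCG. The only potential obstacle is the degenerate empty index range $1\le j\le d-3$, and the checks above show it causes no difficulty.
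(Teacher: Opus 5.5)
Your proposal is correct and matches the paper, which treats this corollary as the immediate $d=3$ instance of Theorem~\ref{thm:dminus1}. Your checks of the specialization are accurate: the empty index range $1\le j\le d-3$, the weights $W_1=M^2$, $W_2=M^2-1$, $W_3=M$, the bound $2R+1<M$, and the upper-separation comparison $K+M^2+\tfrac{1}{10}<2M^2+\Lambda+7$.
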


\section{A $\lceil d/2\rceil$-OR representation}\label{sec:or}

The interval construction uses one tree to encode all coordinate directions. A simpler decomposition gives a complementary upper bound when several PCG predicates may be combined by union.

\begin{theorem}\label{thm:or-bound}
For every integer $d\ge1$ and all positive integers $n_1,\ldots,n_d$, the grid
\(
P_{n_1}\square P_{n_2}\square\cdots\square P_{n_d}
\)
is a $\lceil d/2\rceil$-OR-PCG.
\end{theorem}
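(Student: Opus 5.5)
The plan is to partition the coordinate directions $\{1,\ldots,d\}$ into $\lceil d/2\rceil$ blocks $B_1,\ldots,B_{\lceil d/2\rceil}$. Each block is a pair $\{2j-1,2j\}$, except that when $d$ is odd the last block is the singleton $\{d\}$. For each block $B_j$, let $G_j=(V(G),E_j)$ be the spanning subgraph of $G$ whose edges are exactly the grid edges in directions belonging to $B_j$, that is, pairs differing by $1$ in one coordinate $i\in B_j$ and agreeing elsewhere. Every grid edge lies in exactly one direction, so $E(G)=\bigcup_j E_j$. It therefore suffices to show that each $G_j$ is a PCG on the full vertex set $V(G)$.

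Fix a pair block $B_j=\{a,b\}$. Two vertices are adjacent in $G_j$ only if they agree on all coordinates outside $\{a,b\}$. Hence $G_j$ is the disjoint union, over the $\prod_{i\notin B_j}n_i$ possible assignments to the other coordinates, of copies of $P_{n_a}\square P_{n_b}$. The vertex bijection is simply the projection onto coordinates $a,b$, and it is clear that adjacency inside each class is exactly two-dimensional grid adjacency. Two-dimensional grid graphs are PCGs \citep{HakimPapanRahman2022}, so Lemma~\ref{lem:disjoint-copies} shows that $G_j$ is a PCG.

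For a singleton block $\{d\}$, $G_j$ is a disjoint union of copies of the path $P_{n_d}$. A path can be viewed as the grid $P_{n_d}\square P_1$, or one can invoke directly that paths are PCGs, so Lemma~\ref{lem:disjoint-copies} again applies. Degenerate factors with $n_i=1$ cause no trouble, since they only produce isolated vertices or smaller grids, which are still two-dimensional grids in the sense of the cited result. The union of the $\lceil d/2\rceil$ PCGs $G_j$ is $G$, which proves the theorem.

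The only step needing care is confirming that the cited lemma covers unions of many copies, including the case where there is one copy or copies of a one-vertex graph. It is also worth noting that the cited 2D result includes all $n_a,n_b\ge1$. If needed, I would handle trivial factors by noting that edgeless graphs are PCGs, for instance by taking a star with an interval that is never attained.
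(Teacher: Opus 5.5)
Your proposal is correct and matches the paper's proof essentially step for step. Both partition the directions into the blocks $\{2j-1,2j\}$ (with a singleton when $d$ is odd), take the spanning subgraph of edges whose changing coordinate lies in each block, recognize it as a disjoint union of isomorphic, possibly degenerate, two-dimensional grids, and conclude via the two-dimensional PCG result and the disjoint-copies lemma.
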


\begin{pf}
Let $m=\lceil d/2\rceil$ and partition the coordinate indices into blocks
\(
B_j=\{2j-1,2j\}\cap\{1,\ldots,d\},\ 1\le j\le m.
\)
For each $j$, define a graph $G_j$ on the vertex set of the original grid by retaining exactly those grid edges whose changing coordinate lies in $B_j$.

Fixing all coordinates outside $B_j$ produces one connected component of $G_j$. If $|B_j|=2$, that component is isomorphic to
\(
P_{n_{2j-1}}\square P_{n_{2j}};
\)
if $|B_j|=1$, it is isomorphic to $P_{n_{2j-1}}\cong P_{n_{2j-1}}\square P_1$. Every such component is therefore a two-dimensional grid, possibly degenerate, and hence is a PCG by \citet{HakimPapanRahman2022}. All components of $G_j$ are mutually isomorphic, so Lemma~\ref{lem:disjoint-copies} implies that $G_j$ is a PCG.

Every edge of the full grid changes exactly one coordinate, which belongs to exactly one block $B_j$. Therefore
\(
E(G)=E(G_1)\cup\cdots\cup E(G_m).
\)
Thus $G$ is an $m$-OR-PCG, where $m=\lceil d/2\rceil$.
\end{pf}

\section{Tightness in dimension three}\label{sec:tightness}

Both general bounds equal $2$ when $d=3$. We show that neither can be reduced to $1$ for the full family of three-dimensional grid graphs.

\subsection{The $3\times3\times3$ grid}

Let
\(
G_{3,3,3}=P_3\square P_3\square P_3.
\)
Its vertex set is $\{0,1,2\}^3$, and it has $27$ vertices and $54$ edges.

\begin{figure}[pos=htbp]
\centering
\begin{tikzpicture}[
    scale=.92,
    line cap=round,
    line join=round,
    gridvertex/.style={circle,fill=black,inner sep=1.45pt},
    gridedge/.style={semithick},
    layeredge/.style={semithick,densely dashed}
]
\def\s{.95}
\def\dx{2.75}
\def\dy{.32}

\foreach \z in {0,1,2}{
  \foreach \x in {0,1,2}{
    \foreach \y in {0,1,2}{
      \coordinate (v-\x-\y-\z) at ({\s*\x+\dx*\z},{\s*\y+\dy*\z});
    }
  }
}

\foreach \z in {0,1}{
  \pgfmathtruncatemacro{\zp}{\z+1}
  \foreach \x in {0,1,2}{
    \foreach \y in {0,1,2}{
      \draw[layeredge] (v-\x-\y-\z)--(v-\x-\y-\zp);
    }
  }
}

\foreach \z in {0,1,2}{
  \foreach \x in {0,1}{
    \pgfmathtruncatemacro{\xp}{\x+1}
    \foreach \y in {0,1,2}{
      \draw[gridedge] (v-\x-\y-\z)--(v-\xp-\y-\z);
    }
  }
  \foreach \x in {0,1,2}{
    \foreach \y in {0,1}{
      \pgfmathtruncatemacro{\yp}{\y+1}
      \draw[gridedge] (v-\x-\y-\z)--(v-\x-\yp-\z);
    }
  }
}

\foreach \z in {0,1,2}{
  \foreach \x in {0,1,2}{
    \foreach \y in {0,1,2}{
      \node[gridvertex] at (v-\x-\y-\z) {};
    }
  }
}

\node[font=\small] at ({\s},{-.48}) {$z=0$};
\node[font=\small] at ({\s+\dx},{-.48+\dy}) {$z=1$};
\node[font=\small] at ({\s+2*\dx},{-.48+2*\dy}) {$z=2$};
\end{tikzpicture}
\caption{The grid $P_3\square P_3\square P_3$, displayed as three horizontally separated $3\times3$ layers. Dashed edges join corresponding vertices in consecutive layers.}
\label{fig:grid333}
\end{figure}

\subsection{Exact tree-metric verification}\label{subsec:exact-verification}

We briefly recall the exact recognition principle used for the lower bound. A finite metric $d$ is a tree metric if and only if it satisfies the four-point condition: for every four distinct elements $a,b,c,d$, the maximum of
\(
d(a,b)+d(c,d),\qquad d(a,c)+d(b,d),\qquad d(a,d)+d(b,c)
\)
is attained at least twice \citep{Buneman1971,SempleSteel2003}.

For a graph $G=(V,E)$, introduce one positive real variable $d_{uv}$ for every unordered pair of distinct vertices and interval variables $L,U$ with $0\le L\le U$. Impose the metric axioms, the four-point condition for every four-element subset of $V$, and
\(
uv\in E \Longrightarrow L\le d_{uv}\le U,
\)
\(
uv\notin E \Longrightarrow d_{uv}<L\ \lor\ d_{uv}>U.
\)

\begin{proposition}\label{prop:exact-recognition}
The resulting constraint system is satisfiable if and only if $G$ is a PCG.
\end{proposition}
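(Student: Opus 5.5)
We prove the two directions separately. Both rest on the tree-metric characterization recalled above: a finite metric on a set $V$ satisfies the four-point condition if and only if it is realized as the restriction of $d_T$ to a set of vertices of some tree $T$ with positive edge weights \citep{Buneman1971,SempleSteel2003}.

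\emph{From a PCG to a satisfying assignment.} Suppose $G$ is a PCG with witness $(T,\zeta,d_{\min},d_{\max})$. Set $d_{uv}=d_T(\zeta(u),\zeta(v))$, $L=d_{\min}$ and $U=d_{\max}$. The following checks are routine.
\begin{itemize}
\item Positivity holds because $\zeta$ is a bijection onto leaves, distinct leaves are distinct vertices, and all edge weights are positive.
\item The metric axioms and the four-point condition hold because $d_T$ is a tree metric.
\item The edge and nonedge constraints hold by the definition of a PCG. A nonedge has distance outside $[L,U]$, so it is either $<L$ or $>U$.
\end{itemize}

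\emph{From a satisfying assignment to a PCG.} Suppose the system is satisfiable. By the four-point theorem, the values $d_{uv}$ form a tree metric, so there is a positively weighted tree $T_0$ and an injective map $\zeta_0:V\to V(T_0)$ realizing them. The images are distinct because every $d_{uv}>0$.

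The main obstacle is that the realizing points need not be leaves. Some vertex of $V$ may land on an internal node, or two vertices may lie on a common path with one of them in the middle. To fix this, attach to each image point $\zeta_0(v)$ a new pendant edge of length $\varepsilon>0$ ending in a new leaf $\ell_v$. Call the resulting tree $T$ and set $\zeta(v)=\ell_v$. Then $d_T(\ell_u,\ell_v)=d_{uv}+2\varepsilon$ for all distinct $u,v$, and the leaves of $T$ are exactly the $\ell_v$:
\begin{itemize}
\item every old node $x$ of $T_0$ that was a leaf of $T_0$ is an image point, because in a minimal realization leaves are images; after pruning unused subtrees, $x$ now has degree at least $2$ in $T$;
\item every old internal node remains internal.
\end{itemize}
If the realization has unlabeled leaves, first repeatedly delete them. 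This keeps all distances among image points unchanged.

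It remains to choose the interval. Set $d_{\min}=L+2\varepsilon$ and $d_{\max}=U+2\varepsilon$. Edges satisfy $L\le d_{uv}\le U$, so their shifted distances lie in $[d_{\min},d_{\max}]$. Nonedges satisfy $d_{uv}<L$ or $d_{uv}>U$, and these strict inequalities are preserved by the uniform shift, so nonedges fall outside the interval. Hence $(T,\zeta,d_{\min},d_{\max})$ witnesses that $G$ is a PCG.

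The only remaining care point is the statement of the four-point theorem in the cited sources: realizing points may be arbitrary nodes of the tree, and degree-$2$ unlabeled nodes can be suppressed. The pendant-edge step handles both issues.
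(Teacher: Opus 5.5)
Your proof is correct and follows the paper's argument. You realize the satisfying metric via the four-point theorem, attach equal-length pendant leaves at the labeled points, and shift both endpoints by twice the pendant length, which preserves the strict nonedge inequalities. Your extra steps, pruning unlabeled leaves and checking that the leaves of $T$ are exactly the $\ell_v$, spell out a detail the paper leaves implicit.
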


\begin{pf}
A PCG witness directly supplies a satisfying tree metric and interval. Conversely, a satisfying assignment defines a finite metric satisfying the four-point condition, and therefore has a weighted-tree realization. If a labeled point occurs internally rather than as a leaf, attach a new leaf to it by an equal positive pendant length for every label and shift both interval endpoints by twice that length. This preserves the represented graph and produces a valid PCG witness with all graph vertices represented by leaves.
\end{pf}

For $G_{3,3,3}$, the exact instance contains $\binom{27}{2}=351$ pair-distance variables and imposes the four-point condition on all $\binom{27}{4}=17{,}550$ four-element subsets. We implemented the equivalent exact formulation in Z3 \citep{deMouraBjorner2008}. The complete instance was run on a CloudLab Clemson \texttt{r6615} node and terminated after approximately $30$ hours with the result \texttt{unsat}. The computation used exact arithmetic; it was not a floating-point or bounded-weight search. The use of the four-point characterization for exact SMT-based recognition and enumeration of PCGs is also developed in our concurrent work~\citep{HakimSultanaBayzid2026SMT}.

\begin{theorem}\label{thm:333-not-pcg}
The grid $P_3\square P_3\square P_3$ is not a PCG.
\end{theorem}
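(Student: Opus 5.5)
The proof will be a direct application of Proposition~\ref{prop:exact-recognition} to $G=G_{3,3,3}$, together with the exact SMT computation of Section~\ref{subsec:exact-verification}. Suppose, for contradiction, that $P_3\square P_3\square P_3$ is a PCG. Take a witness $(T,\zeta,d_{\min},d_{\max})$. Setting $d_{uv}=d_T(\zeta(u),\zeta(v))$, $L=d_{\min}$ and $U=d_{\max}$ gives an assignment to the $351$ pair variables and the two interval variables. This assignment satisfies the metric axioms and, by the easy direction of the four-point characterization \citep{Buneman1971,SempleSteel2003}, the four-point condition on all $17{,}550$ quadruples. It also satisfies the edge constraints $L\le d_{uv}\le U$ and the nonedge constraints $d_{uv}<L$ or $d_{uv}>U$, which hold by the defining property of a PCG. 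Strict positivity of $d_{uv}$ follows because $\zeta$ is injective onto leaves and all tree edges have positive weight. Hence the constraint system for $G_{3,3,3}$ would be satisfiable.

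The second step is to invoke the solver result. The exact formulation, encoded over real arithmetic in Z3, returns \texttt{unsat}. Since the arithmetic is exact, with no discretisation or weight bounds, this contradicts the satisfying assignment just constructed. Therefore $G_{3,3,3}$ is not a PCG. Only the forward direction of Proposition~\ref{prop:exact-recognition} is actually needed here. Its converse is what guarantees that \texttt{unsat} is not an artefact of an overly restrictive encoding, but logically the lower bound uses only ``PCG $\Rightarrow$ satisfiable''.

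The main obstacle is that the argument rests on trusting the computation. The paper should therefore make clear that the encoded instance is precisely the system described: all six orderings of each quadruple collapse to the three pairings, nonedges use strict disjunctions, and nonlinear or rounding features are absent. The four-point condition is piecewise linear, so the instance lies in quantifier-free linear real arithmetic (QF\_LRA with disjunctions), which is decidable, and Z3's procedure for it is exact. I would additionally reduce the search by symmetry, using the $48$ automorphisms of the cube and the fact that one may normalise $L=1$ by scaling. I would also report independent reproducibility, for instance by extracting an unsat core or cross-checking with a second solver such as CVC5. This strengthens confidence without changing the logical structure of the proof.
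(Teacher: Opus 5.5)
Your argument is correct and is the paper's proof: the forward direction of Proposition~\ref{prop:exact-recognition} (a PCG witness yields a satisfying assignment) combined with the exact \texttt{unsat} result from Z3. You are also right that only ``PCG $\Rightarrow$ satisfiable'' is logically needed, though your commentary has the roles reversed: it is this forward direction that ensures \texttt{unsat} is not an artefact of an overly restrictive encoding, while the converse only matters for trusting a \texttt{sat} answer.
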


\begin{pf}
By Proposition~\ref{prop:exact-recognition}, satisfiability of the exact tree-metric instance is equivalent to the existence of a PCG representation. The instance for $G_{3,3,3}$ is unsatisfiable. Therefore no positively weighted tree and single distance interval represent $P_3\square P_3\square P_3$.
\end{pf}


\subsection{Consequences}

\begin{corollary}\label{cor:interval-tight}
The minimum integer $k$ such that every three-dimensional grid is a $k$-interval-PCG is exactly $2$.
\end{corollary}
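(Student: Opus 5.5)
The claim has two parts: an upper bound of $2$ and a lower bound of $2$. I would prove each by citing results already established above.

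\emph{Upper bound.} Corollary~\ref{cor:3d-two-interval} is Theorem~\ref{thm:dminus1} with $d=3$. It says every three-dimensional grid $P_{n_1}\square P_{n_2}\square P_{n_3}$ is a $2$-interval-PCG. So $k=2$ works for the whole family, and the minimum is at most $2$. I would also note that a $k$-interval-PCG is a $k'$-interval-PCG for every $k'\ge k$. To see this, add further disjoint degenerate intervals placed above every realized leaf distance; this shows the set of admissible $k$ is upward closed. Strictly, this monotonicity is not needed for the statement about the minimum.

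\emph{Lower bound.} The only smaller candidate is $k=1$, and a $1$-interval-PCG is exactly a PCG by definition. It therefore suffices to exhibit one three-dimensional grid that is not a PCG. Theorem~\ref{thm:333-not-pcg} gives exactly this: $P_3\square P_3\square P_3$ is a three-dimensional grid (take $n_1=n_2=n_3=3$) and is not a PCG. Hence $k=1$ fails for the family, and the minimum is at least $2$.

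Combining the two bounds gives the claim. There is no real obstacle: all the substantive work is in Theorem~\ref{thm:dminus1} and the exact verification behind Theorem~\ref{thm:333-not-pcg}. The one point to state explicitly is the identification of $1$-interval-PCGs with PCGs, which is part of the definition of $k$-interval-PCG.
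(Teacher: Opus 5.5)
Your proof is correct and matches the paper's: the upper bound comes from Corollary~\ref{cor:3d-two-interval}, and the lower bound comes from Theorem~\ref{thm:333-not-pcg} together with the fact that $1$-interval-PCGs are exactly PCGs. Your upward-closure remark about adding degenerate intervals is valid, but as you note, the statement does not need it.
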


\begin{pf}
Corollary~\ref{cor:3d-two-interval} gives the upper bound $2$. Theorem~\ref{thm:333-not-pcg} gives a three-dimensional grid that is not a $1$-interval-PCG, since $1$-interval-PCGs are precisely PCGs.
\end{pf}

\begin{corollary}\label{cor:or-tight}
The minimum integer $k$ such that every three-dimensional grid is a $k$-OR-PCG is exactly $2$.
\end{corollary}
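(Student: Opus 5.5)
The plan is to prove the two inequalities separately and to reuse the earlier results directly.

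For the upper bound, Theorem~\ref{thm:or-bound} with $d=3$ gives $\lceil 3/2\rceil=2$, so every three-dimensional grid is a $2$-OR-PCG. Alternatively, one can start from Corollary~\ref{cor:3d-two-interval} and use the remark in the introduction that every $k$-interval-PCG is a $k$-OR-PCG. To make that remark self-contained, split the interval union into its two intervals $I_1$ and $I_2$. Keep the same weighted tree and bijection, and let $G_j$ be the PCG on $V$ defined by the single interval $I_j$. A pair $uv$ is an edge of $G$ exactly when its distance lies in $I_1\cup I_2$, which happens exactly when $uv\in E(G_1)\cup E(G_2)$. Either route gives the upper bound $2$.

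For the lower bound, we need one three-dimensional grid that is not a $1$-OR-PCG. By definition, a $1$-OR-PCG is a graph $G=(V,E_1)$ with $E=E_1$ for some PCG $G_1$ on $V$. So $1$-OR-PCGs are exactly the PCGs. Theorem~\ref{thm:333-not-pcg} shows that $P_3\square P_3\square P_3$ is not a PCG, so this grid is not a $1$-OR-PCG. Hence no $k<2$ works.

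Combining the two bounds shows that the minimum $k$ is exactly $2$. There is no genuine obstacle here, since all the difficulty sits in Theorem~\ref{thm:333-not-pcg}. The only point needing care is the identification of $1$-OR-PCGs with PCGs. Because the OR definition requires the constituent PCGs to be defined on the same vertex set $V$, the case $k=1$ is literally the PCG definition.
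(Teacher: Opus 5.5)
Your proposal is correct and matches the paper's proof: you take the upper bound from Theorem~\ref{thm:or-bound} with $d=3$, and the lower bound from identifying $1$-OR-PCGs with PCGs and applying Theorem~\ref{thm:333-not-pcg}. Your alternative upper-bound route is also valid: it starts from Corollary~\ref{cor:3d-two-interval} and splits the two intervals into two single-interval PCGs on the same tree.
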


\begin{pf}
Theorem~\ref{thm:or-bound} gives the upper bound $2$. A $1$-OR-PCG is a PCG, and Theorem~\ref{thm:333-not-pcg} supplies the matching lower bound.
\end{pf}

\begin{corollary}\label{cor:large-grid graphs}
Let
\(
G=P_{n_1}\square\cdots\square P_{n_d}.
\)
If at least three of $n_1,\ldots,n_d$ are at least $3$, then $G$ is not a PCG. In particular, $P_a\square P_b\square P_c$ is not a PCG whenever $a,b,c\ge3$.
\end{corollary}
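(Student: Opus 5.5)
The plan is to show that $G$ contains $P_3\square P_3\square P_3$ as an induced subgraph. Then we combine Theorem~\ref{thm:333-not-pcg} with the fact, recalled in the introduction, that the class of PCGs is hereditary under induced subgraphs \citep{Yanhaona2009PCG}. If $G$ were a PCG with witness $(T,\zeta,d_{\min},d_{\max})$, we would restrict to the leaves corresponding to a vertex subset $S$. We would then prune $T$ to the minimal subtree spanning those leaves and suppress any degree-two vertices by adding weights. This preserves all leaf-to-leaf distances among $S$. So the same interval witnesses that $G[S]$ is a PCG.

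For the embedding, we may permute factors and assume $n_1,n_2,n_3\ge3$. Define
\[
S=\{(x_1,x_2,x_3,0,\ldots,0): x_1,x_2,x_3\in\{0,1,2\}\}.
\]
This is a set of valid vertices, since $n_i\ge1$ for all $i$.

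We verify that $G[S]\cong P_3\square P_3\square P_3$ via the obvious map $(x_1,x_2,x_3,0,\ldots,0)\mapsto(x_1,x_2,x_3)$. Two vertices of $S$ agree in all coordinates beyond the third. So they are adjacent in $G$ iff they differ by $1$ in exactly one of the first three coordinates and agree in the others. This is precisely the adjacency rule of $P_3\square P_3\square P_3$. Hence the subgraph is induced and isomorphic to $G_{3,3,3}$, and $G$ is not a PCG.

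The "in particular" clause is the case $d=3$ with $a,b,c\ge3$. The only step needing any care is hereditarity. If one prefers not to cite it, the pruning argument above suffices. One checks that the pruned tree still has positive weights and that the elements of $S$ remain leaves, which is immediate since pruning only deletes subtrees containing no leaf of $S$. I expect no real obstacle.
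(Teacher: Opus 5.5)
Your proposal is correct and follows essentially the same route as the paper. The paper also fixes three factors of order at least $3$, takes three consecutive vertices in each and a fixed vertex in every other factor to induce $P_3\square P_3\square P_3$, and concludes from Theorem~\ref{thm:333-not-pcg} via hereditarity of PCGs under induced subgraphs. Your explicit pruning argument, which checks that leaves and positive weights are preserved, is a slightly more careful version of the paper's one-line justification.
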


\begin{pf}
Choose three path factors of order at least $3$, select three consecutive vertices in each, and fix one arbitrary vertex in every other factor. The selected vertices induce $P_3\square P_3\square P_3$. The PCG class is hereditary under induced subgraphs: deleting leaves outside a selected vertex set preserves all distances among the remaining leaves. Hence a PCG containing this induced subgraph would imply that $P_3\square P_3\square P_3$ is a PCG, contradicting Theorem~\ref{thm:333-not-pcg}.
\end{pf}

\section{Concluding remarks}\label{sec:conclusion}

We established two general upper bounds for grid graphs under natural extensions of pairwise compatibility graphs. The $(d-1)$-interval construction keeps one weighted tree and uses a large-base encoding to distinguish coordinate directions. The $\lceil d/2\rceil$-OR construction instead partitions the directions into pairs and combines simpler PCG representations. These approaches exploit different aspects of the Cartesian-product structure.

The exact obstruction $P_3\square P_3\square P_3$ determines the worst-case parameters in dimension three: two intervals are necessary and sufficient for all three-dimensional grid graphs, and two PCG predicates are necessary and sufficient in the OR model. The induced-subgraph consequence shows more broadly that every grid with three factors of order at least three lies outside the PCG class.

Several questions remain open. The obstruction above does not classify thin three-dimensional grid graphs such as $P_2\square P_m\square P_n$. For $d\ge4$, the gap between the interval upper bound $d-1$ and the general lower bound $2$ remains wide, and the optimality of the $\lceil d/2\rceil$ OR bound is also unknown. A structural proof of the $3\times3\times3$ obstruction would be especially valuable, both for conceptual understanding and for identifying broader forbidden configurations.

\section*{Data and code availability}
The source code is available at \url{https://github.com/skhakim/PCG-Enumerative-Study/tree/grid333-check}.

\section*{Acknowledgement}
We are grateful to Muhammad Abdullah Adnan for facilitating our access to CloudLab. We used generative AI tools for language polishing and code refactoring assistance.


\bibliographystyle{cas-model2-names}
\bibliography{grid-pcg-refs}

\end{document}